\documentclass[a4paper,11pt]{article}
\textwidth380pt
\hoffset-40pt
\voffset+0pt
\headsep-20pt
\textheight510pt

\usepackage{amsmath, amsfonts, amscd, amssymb, amsthm, enumerate, xypic}

\def\LC{\text{LC}}

\def\card{\#\,}

\newcommand{\Vect}{\operatorname{span}}

\newcommand{\supp}{\operatorname{supp}}

\renewcommand{\setminus}{\smallsetminus}


\def\R{\mathbb{R}}


\def\calD{\mathcal{D}}

\def\calN{\mathcal{N}}
\def\calO{\mathcal{O}}
\def\calP{\mathcal{P}}


\def\lcro{\mathopen{[\![}}
\def\rcro{\mathclose{]\!]}}

\theoremstyle{definition}
\newtheorem{Def}{Definition}
\newtheorem{Not}[Def]{Notation}

\theoremstyle{plain}
\newtheorem{theo}{Theorem}
\newtheorem{prop}[theo]{Proposition}

\newtheorem{lemma}[theo]{Lemma}

\theoremstyle{plain}

\theoremstyle{remark}

\title{On the finite topology of a vector space and the domination problem for families of norms}
\author{Cl\'ement de Seguins Pazzis
\footnote{Universit\'e de Versailles Saint-Quentin-en-Yvelines, Laboratoire de Math\'ematiques
de Versailles, 45 avenue des \'Etats-Unis, 78035 Versailles cedex, France}
\footnote{e-mail address: dsp.prof@gmail.com}}

\begin{document}

\thispagestyle{plain}

\maketitle

\begin{abstract}
Let $V$ be a real or complex vector space. 
The finite topology of $V$ consists of all the subsets $U$ 
for which the intersection $U \cap F$ is closed in $F$ for every finite-dimensional linear subspace of $V$. 
It is known that if $V$ has countable dimension, then this topology coincides with the greatest vector space
topology and with the greatest locally convex vector space topology.
Here, we note that in this case the finite topology is simply the union of all the normed space topologies.

In connection to this problem, we characterize the vector spaces on which every family of norms with given cardinality is dominated.
\end{abstract}

\vskip 2mm
\noindent
\emph{AMS Classification:} 46A03, 03E17

\vskip 2mm
\noindent
\emph{Keywords:} normed vector space, topological vector space, finite topology, comparison of norms, ordinals.

\section{Introduction}

In this note, all the vector spaces we consider are over the field of real numbers or over the one of complex numbers, and all 
topological vector spaces are Hausdorff by definition. 
Throughout, $\omega$ denotes the first infinite ordinal, which we identify with the set of all \emph{non-negative} integers, and $\omega_1$ denotes the first uncountable ordinal. The cardinality of a set $I$ is denoted by $\card I$.

Let $V$ be a vector space. A subset $U$ of $V$ is called \textbf{finite open} when $U \cap F$ is open in $F$ for every finite-dimensional
linear subspace $F$ of $V$. The set of all finite open subsets on $V$ is a topology, which we denote by $\tau_f(V)$
and call the \textbf{finite topology} on $V$: it was first introduced by Hille and Phillips \cite{HillePhillips}. 
When $\dim V$ is uncountable, Harremo\"es (see page 547 in \cite{Bisgaard}) has shown that $\tau_f(V)$ is not locally convex, and 
Bisgaard \cite{Bisgaard} improved the result by proving that $\tau_f(V)$ is not a vector space topology. 
However, when $V$ has countable dimension, Kakutani and Klee \cite{Kakutani} proved that $\tau_f(V)$ coincides with the following classical vector space topologies:
\begin{itemize}
\item the greatest vector space topology on $V$, denoted by $\tau_{\max}(V)$;
\item the greatest locally convex topology on $V$, denoted by $\tau^{\LC}_{\max}(V)$.
\end{itemize}
Moreover, it is known that $\tau^{\LC}_{\max}(V)$ differs from $\tau_{\max}(V)$ when $V$ has uncountable dimension
\cite{Jarchow,Wilanski,Zelasko}. Thus, in general $\tau^{\LC}_{\max}(V) \subset \tau_{\max}(V) \subset \tau_f(V)$, 
all the inclusions are strict if $V$ has countable dimension, and they are all equalities otherwise. 

Here, we shall be concerned with another set, which seems to have been neglected. 
Denote by $\calN(V)$ the set of all norms on $V$; for such a norm $N$, denote by $\calO_N$ the associated topology on $V$, and set 
$$\tau_{\calN}(V)=\underset{N \in \calN(V)}{\bigcup} \calO_{N.}$$
Hence, an element of $\tau_{\calN}$ is a subset of $V$ that is open for at least one norm on $V$: we will call such subsets
\textbf{norm-openable}. Obviously,  
$$\tau_{\calN}(V) \subset \tau^{\LC}_{\max}(V) \subset \tau_{\max}(V) \subset \tau_f(V)$$
and a natural question is whether the equality $\tau_{\calN}(V)=\tau^{\LC}_{\max}(V)$ holds. 

Our main result, the proof of which constitutes the first part of the present article, completely answers that question:

\begin{theo}\label{maintheo}
If $V$ has countable dimension, then $\tau_{\calN}(V)=\tau_f(V)$. \\
Otherwise, $\tau_{\calN}(V)$ is not a topology and hence $\tau_{\calN}(V) \neq \tau^{\LC}_{\max}(V)$. 
\end{theo}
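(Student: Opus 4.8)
The plan is as follows. First record two elementary facts. For a subset $A$ of $V$ and a norm $N$, $A$ is open for $\calO_N$ if and only if $V\setminus A$ is $\calO_N$-closed, i.e. $d_N(x,V\setminus A)>0$ for every $x\in A$, where $d_N(x,S):=\inf_{y\in S}N(x-y)$ and $B_N(x,r)$ denotes the corresponding open ball. And the open balls of norms form a basis of neighbourhoods of every point for $\tau^{\LC}_{\max}(V)$: given an absolutely convex absorbing set $W$ and an arbitrary norm $N_0$ on $V$, the map $\max(p_W,N_0)$ — with $p_W$ the gauge of $W$ — is a norm whose open unit ball is contained in $W$. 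Hence $\tau_{\calN}(V)$ contains every norm ball; as it is obviously stable under finite intersections (if $A$ is $\calO_{N_1}$-open and $B$ is $\calO_{N_2}$-open, both, hence $A\cap B$, are $\calO_{N_1+N_2}$-open) and contains $\emptyset$ and $V$, it is a topology if and only if it is stable under arbitrary unions, and then it must coincide with the topology generated by all norm balls, that is, with $\tau^{\LC}_{\max}(V)$. So for the second assertion it suffices to exhibit, when $\dim V$ is uncountable, a set in $\tau^{\LC}_{\max}(V)$ that is not norm-openable: such a set is a union of the norm balls it contains, so $\tau_{\calN}(V)$ then fails to be stable under unions and cannot be $\tau^{\LC}_{\max}(V)$.

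For the first assertion, fix an increasing exhaustion $V=\bigcup_{n\in\omega}V_n$ by finite-dimensional subspaces (with $\dim V_n=n$, after refining), and a finite-open set $U$; put $C:=V\setminus U$. I would use two reductions. (i) Any countable family of norms $(N_k)_{k\in\omega}$ on $V$ is dominated by a single norm: since each $N_k$ is continuous on each $V_n$, a series $N:=\sum_k\varepsilon_kN_k$ with the $\varepsilon_k>0$ small enough is finite at every point of $V$, and then $N\ge\varepsilon_kN_k$; consequently, to show $U$ norm-openable it is enough to produce countably many norms $(N_m)_{m\in\omega}$ with $C=\bigcap_m\overline C^{\calO_{N_m}}$, because if $x\notin\overline C^{\calO_{N_k}}$ for some $k$ then $d_N(x,C)\ge\varepsilon_kd_{N_k}(x,C)>0$. (ii) Since every $x\notin C$ lies in some $V_m$, it suffices, for each fixed $m$, to build a norm $N_m$ with $\overline C^{\calO_{N_m}}\cap V_m=C\cap V_m$; equivalently, so that no point of $V_m$ is an $N_m$-limit of $\{c\in C:c\notin V_m\}$. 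Here one can take $N_m$ to be an $\ell^1$-type norm built recursively along the exhaustion: having fixed the unit ball of $N_m$ on $V_n$, extend it to $V_{n+1}$ by giving the new basis vector $e_{n+1}$ an $N_m$-norm large enough that the points of $C$ first appearing at level $n+1$ are kept at positive $N_m$-distance from the points of $V_m$; the one subtlety — obstacles at level $n+1$ whose top coordinate tends to $0$ — is harmless, because by closedness of $C\cap V_n$ in $V_n$ such obstacles either tend to a point of $C\cap V_n$ or escape to infinity. The main obstacle of this part is precisely this recursion: a single sequence of weights must protect all of $V_m$ against all later levels simultaneously, and the availability of such a sequence is exactly what fails in higher dimension.

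For the second assertion, reduce first to $\dim V=\aleph_1$: pick linearly independent vectors $(e_\alpha)_{\alpha<\omega_1}$ in $V$, let $W$ be their span and $\pi\colon V\to W$ the projection associated with a basis of $V$ extending $(e_\alpha)$; then $\pi$ is $\tau^{\LC}_{\max}$-continuous, so $\pi^{-1}(U_0)\in\tau^{\LC}_{\max}(V)$ for $U_0\in\tau^{\LC}_{\max}(W)$, while $\pi^{-1}(U_0)\cap W=U_0$ shows $\pi^{-1}(U_0)$ is not norm-openable in $V$ when $U_0$ is not norm-openable in $W$. So take $V=W=\bigoplus_{\alpha<\omega_1}\R e_\alpha$ and let $\rho$ be the norm with $\rho\bigl(\sum t_\alpha e_\alpha\bigr)=\sum|t_\alpha|$. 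The argument rests on two ingredients. First, $(V,\tau^{\LC}_{\max}(V))$ is not separable — it has the $\aleph_1$ pairwise disjoint open sets $B_\rho(e_\alpha,1)$ — in sharp contrast with the countable-dimensional case, where the $\Q$-span of a basis is $\tau^{\LC}_{\max}$-dense; this is precisely why the present argument has no analogue in the first case. Second — and this is the crux — there exists a family of norms $(N_\alpha)_{\alpha<\omega_1}$ on $V$, which we may take with $N_\alpha\ge\rho$ (replace $N_\alpha$ by $N_\alpha+\rho$), that is \emph{not dominated}: no norm $N$ satisfies $N_\alpha\le c_\alpha N$ for scalars $c_\alpha$ and all $\alpha$; the existence of such families on uncountable-dimensional spaces is exactly what the second part of this paper analyzes. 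Granting this, choose by transfinite recursion points $x_\alpha$ with $\rho(x_\alpha-x_\beta)\ge1$ for all $\beta<\alpha$ — possible because countably many $\rho$-balls of radius $1$ cannot cover $V$, e.g. they cannot cover $\{2e_\gamma:\gamma<\omega_1\}$ — and set $U_\alpha:=B_{N_\alpha}(x_\alpha,\tfrac12)$ and $U:=\bigcup_{\alpha<\omega_1}U_\alpha$. Since $N_\alpha\ge\rho$, the $U_\alpha$ are pairwise disjoint open sets, each norm-openable, so $U\in\tau^{\LC}_{\max}(V)$. But if $U$ were open for some $\calO_N$, then for each $\alpha$ a ball $B_N(x_\alpha,\delta_\alpha)$ would lie in $U$; being convex, hence connected, and containing $x_\alpha$, it would lie in $U_\alpha$, whence $B_N(0,\delta_\alpha)\subseteq B_{N_\alpha}(0,\tfrac12)$ and so $N_\alpha\le(2\delta_\alpha)^{-1}N$, contradicting the non-domination of $(N_\alpha)$. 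Thus no $\calO_N$ makes $U$ open, which finishes the proof modulo the construction of $(N_\alpha)$ — the genuine difficulty, where the combinatorics of $\omega_1$ together with the countable-dimensionality of the subspaces $\operatorname{span}(e_\alpha:\alpha<\beta)$ come into play.
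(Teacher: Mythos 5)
The uncountable-dimensional half of your proposal is essentially the paper's argument and is sound: disjoint balls $B_{N_\alpha}(x_\alpha,\tfrac12)$ built from an undominated family with $N_\alpha\ge\rho$, whose union cannot be norm-openable, with the undominated family taken as a black box exactly as the paper does (its Proposition 2, proved in the last section). Your extra reduction to dimension $\aleph_1$ via the projection $\pi$, together with the observation that norm balls form a neighbourhood base of $\tau^{\LC}_{\max}$, is a correct way to turn ``a $\tau^{\LC}_{\max}$-open set that is not norm-openable'' into ``$\tau_{\calN}$ is not a topology''; the paper avoids the projection by using a family indexed by $\kappa=\dim V$ on $V$ itself. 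One small repair is needed: ``being convex, hence connected, it lies in $U_\alpha$'' is not justified as written, because the sets $U_\beta$ are open for the various norms $N_\beta$, not for $N$, so they are not a disjoint open cover of the $N$-ball in any single topology; the fix is the paper's: intersect with each line through $x_\alpha$, where all norm topologies coincide, conclude for each line section, and vary the line.

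The countable-dimensional half, however, has a genuine gap at its central step. Your reductions are fine: the weighted series $\sum_k\varepsilon_kN_k$ does dominate any countable family of norms on a countable-dimensional space, and it would indeed suffice to build, for each $m$, a norm $N_m$ whose closure of $C$ meets $V_m$ only in $C\cap V_m$. But the construction of $N_m$ is not achieved. First, your ``equivalent'' reformulation (no point of $V_m$ is an $N_m$-limit of $C\setminus V_m$) is strictly stronger and in general false: if $x\in C\cap V_m$ and $C$ also contains $x+\tfrac1j e_{n+1}$ for all $j\ge1$ (a finite-closed configuration), then $x$ is a limit of $C\setminus V_m$ for \emph{every} norm; only the points of $U\cap V_m$ can be protected. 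Second, and more seriously, the per-level criterion ``choose the weight of $e_{n+1}$ large enough that the new obstacles are kept at positive distance from $V_m$'' does not deliver what is needed: if ``positive distance'' is meant uniformly over $U\cap V_m$, no finite weight achieves it (take $e_0\in V_m$, no obstacles below level $n+1$, and obstacles $je_0+2^{-j}e_{n+1}$; for any weight $w$ the distance from $je_0$ to these obstacles is at most $w2^{-j}$), while if it is meant pointwise, it holds automatically for \emph{any} weight because $C\cap V_{n+1}$ is closed in $V_{n+1}$ --- and it says nothing about the quantity that actually matters, namely $\inf_n d_{N_m}(x,C\cap V_n)$, which can vanish although each term is positive. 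So the recursion has no usable success criterion per step and no argument that the limiting distances stay positive; you name this obstacle yourself but do not overcome it, and it is precisely the hard point of the theorem. The paper sidesteps it by protecting one compact ball at a time rather than a whole subspace: its Lemmas 3 and 4 extend a norm, one dimension at a time, so that its \emph{closed unit ball} remains inside the (translated) finite open set, the compactness of the current unit ball against the closed set $F'\setminus O$ furnishing the margin at each step; this yields countably many norms (one per ball of a countable cover of each $O\cap F_n$), which are then merged by domination exactly as in your step (i). Repairing your plan along these lines --- cover $U\cap V_m$ by countably many closed balls and prove such an extension lemma --- essentially reproduces the paper's proof.
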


In particular, if $V$ has countable dimension then our theorem directly proves the equalities
$$\tau_{\calN}(V) = \tau^{\LC}_{\max}(V) = \tau_{\max}(V) = \tau_f(V)$$
and so the elements of $\tau^{\LC}_{\max}(V)$ are the norm-openable subsets of $V$.
Strangely, we could not find a mention of the latter result in the classical literature, and bridging that 
gap was our first motivation for writing the present article. 

Theorem \ref{maintheo} is deeply connected to the problem of dominating families of norms on $V$.
Recall that a norm $N'$ on $V$ dominates another norm $N$ on $V$ if and only if $N \leq \alpha N'$ for some positive real constant $\alpha$.
We extend this definition to whole families of norms as follows:

\begin{Def}
Let $(N_i)_{i \in I}$ be a family of norms on the vector space $V$.
We say that $(N_i)_{i \in I}$ is \textbf{dominated} if there exists a norm $N'$ on $V$ which dominates $N_i$ for all $i \in I$.
\end{Def}

The proof of Theorem \ref{maintheo} will use the following key observation:

\begin{prop}\label{sequencenormsprop}
Let $V$ be a vector space with dimension $\kappa$. 
If $\kappa$ is countable, then every (countable) sequence of norms on $V$ is dominated. 
Otherwise, there exists an undominated family of norms of $V$ that is indexed over $\kappa$. 
\end{prop}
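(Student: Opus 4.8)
The plan is to handle both halves with one device---weighted $\ell^1$ norms attached to a fixed basis---which turns domination into a combinatorial question about real-valued matrices indexed by the basis. Throughout I fix a basis and write $x=\sum_\beta x_\beta e_\beta$ for the coordinates of $x$.

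For the countable case, fix a basis $(e_n)_{n\in\omega}$ and a sequence $(N_k)_{k\in\omega}$ of norms, and build a single weighted $\ell^1$ norm whose weights outgrow every $N_k$ along the basis. I would set $c_n:=\max_{0\le k\le n}N_k(e_n)$, which is positive since each $N_k$ is a norm, and define $N'(x):=\sum_n|x_n|\,c_n$; this is a genuine norm because the weights are positive and the sums finite. For fixed $k$ one has $N_k(e_n)\le c_n$ as soon as $n\ge k$, so only the finitely many indices $n<k$ need correcting: with $\gamma_k:=\max\bigl(1,\max_{n<k}N_k(e_n)/c_n\bigr)$ one gets $N_k(e_n)\le\gamma_k c_n$ for all $n$, hence $N_k(x)\le\sum_n|x_n|N_k(e_n)\le\gamma_k N'(x)$ by the triangle inequality. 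Thus $N'$ dominates every $N_k$.

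For the uncountable case, since any family containing an undominated subfamily is undominated, it suffices to produce $\omega_1$ undominated norms using $\omega_1$ linearly independent vectors $(e_\beta)_{\beta<\omega_1}$ (extend to a basis, give the remaining coordinates weight $1$, and pad the index set up to $\kappa$ by repetition). To data $(g_\xi)_{\xi<\omega_1}$ with each $g_\xi\colon\omega_1\to\omega$ I attach the norms $N_\xi(x):=\sum_\beta|x_\beta|\,2^{g_\xi(\beta)}$. The key reduction: if some norm $N'$ dominated this family, then evaluating the inequalities $N_\xi\le c_\xi N'$ at the basis vectors $x=e_\beta$ and setting $b(\beta):=\log_2 N'(e_\beta)$ would yield $g_\xi(\beta)\le b(\beta)+\log_2 c_\xi$ for all $\xi,\beta$; that is, one function $b$ would bound every row $g_\xi$ up to an additive constant. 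Only this necessary direction of domination is used, and it holds for an \emph{arbitrary} norm $N'$ (we merely read off its values on basis vectors), which is what makes the reduction clean. So it remains to construct functions $g_\xi$ for which no such $b$ exists.

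The combinatorial core is the step I expect to be the main obstacle, precisely because there is no strictly increasing $\omega_1$-sequence of reals, so the $g_\xi$ cannot simply be made ``increasing''. First I would observe that any $b\colon\omega_1\to\R$ is bounded above, by some $M$, on an uncountable set $S$, since $\omega_1=\bigcup_{m}\{b\le m\}$ is a countable union. Hence it suffices to arrange that for every uncountable $S\subseteq\omega_1$ some row $g_\xi$ is unbounded above on $S$: that row then has $g_\xi(\beta)-b(\beta)\to\infty$ along $S$, defeating $b$. I would realize this with an Ulam-type matrix: for each $\beta<\omega_1$ fix an injection $f_\beta\colon\beta\to\omega$ and set $g_\xi(\beta):=f_\beta(\xi)$ for $\xi<\beta$ and $0$ otherwise. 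If some uncountable $S$ made every $g_\xi$ bounded above on $S$, then for each $\xi$ there is $M_\xi\in\omega$ with $f_\beta(\xi)\le M_\xi$ whenever $\beta\in S$ and $\beta>\xi$; by pigeonhole $M_\xi$ equals a fixed $m$ on an uncountable $T\subseteq S$, and then for every $\beta\in T$ the injection $f_\beta$ maps $T\cap\beta$ into $\{0,\dots,m\}$, so $|T\cap\beta|\le m+1$. But $T$, being uncountable, has an element $\beta$ with at least $m+2$ predecessors in $T$, forcing an $(m+2)$-element set into $\{0,\dots,m\}$---a contradiction. Therefore no $b$ bounds all rows, the norms $(N_\xi)_{\xi<\omega_1}$ are undominated, and padding to index over $\kappa$ finishes the uncountable half. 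The only delicate point is this last paragraph; everything else is bookkeeping with weighted $\ell^1$ norms.
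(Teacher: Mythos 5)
Your proof is correct, and it reaches the result by a more direct route than the paper, though the combinatorial cores coincide exactly. Your diagonal weights $c_n=\max_{0\le k\le n}N_k(e_n)$ are the same trick the paper uses to prove $\calP(\omega,\omega)$, and your matrix of injections $f_\beta\colon\beta\to\omega$ is the same witness used in the paper's proof of $\neg\calP(\omega_1,\omega_1)$. The difference is structural: the paper channels everything through Proposition \ref{PtoNorms}, an exact equivalence between domination of $I$-families of norms on a $\kappa$-dimensional space and Hathaway's property $\calP(\kappa,I)$ for two-variable functions; the sufficiency direction there requires constants $c(i,J)$ obtained from equivalence of norms on the finite-dimensional subspaces $V_J$, with $J$ ranging over the finite subsets of the basis, plus a separate lemma converting domination of norms as functions into domination by a norm. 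You bypass that machinery with weighted $\ell^1$ norms: since $N'(x)=\sum_\beta |x_\beta|\, c_\beta$ dominates any norm $N$ satisfying $N(e_\beta)\le \gamma c_\beta$ (by the triangle inequality over the finite support of $x$), domination can be both created and refuted purely from values on basis vectors, which removes the finite-subset bookkeeping; your exponential weights $2^{g_\xi(\beta)}$ then turn the multiplicative domination constants into additive ones, an equivalent normal form to the paper's $\max(G,H)$ bound. What each approach buys: yours is self-contained and more elementary (no bounding number, no general theory of separably dominated functions) and proves exactly Proposition \ref{sequencenormsprop}; the paper's abstraction yields the much stronger Theorem \ref{normstheo}, the full characterization in terms of $\mathfrak{b}$ of which pairs $(\dim V,\card I)$ force domination. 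Two small points in your write-up deserve to be made explicit: the countable case tacitly assumes $\dim V$ is infinite (the finite-dimensional case is immediate, all norms there being equivalent), and the padding of the $\omega_1$-indexed family up to $\kappa$ is valid precisely because a family containing an undominated subfamily is undominated, as you note --- the paper gets the same effect from the monotonicity of $\calP$ in the cardinalities.
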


Proposition \ref{sequencenormsprop} will be proved in Section \ref{normsSECTION}
as a special case of a more general result on the domination of families of norms (see Theorem \ref{normstheo}). 
In the next section, we take Proposition \ref{sequencenormsprop} for granted and we show how it leads to Theorem~\ref{maintheo}. 

Before we proceed, we would like to point to a related open problem. 
Given a point $x \in V$, say that a subset $A$ is a finite neighborhood of $V$ whenever, for every linear subspace $F$
of $V$, $A \cap F$ is a neighborhood of $x$ in $F$. In that situation, is it true that 
$A$ is a neighborhood of $x$ with respect to the finite topology? And, if so, is $A$ a neighborhood of $x$ for some normed topology
$\calO_N$?

\section{Openable subsets vs finite open subsets}\label{maintheosection}

Here, we take Proposition \ref{sequencenormsprop} for granted and we use it to prove Theorem \ref{maintheo}.

\subsection{The case of uncountable-dimensional vector spaces}

Here, we let $V$ be a vector space with uncountable dimension $\kappa$.
By Proposition \ref{sequencenormsprop}, there is an undominated family $(N_k)_{k \in \kappa}$ of norms on $V$.
We shall use those norms to construct a family of norm-openable subsets of $V$ whose union is not norm-openable.

We choose a basis $(e_k)_{k \in \kappa}$ of $V$ and we consider the corresponding supremum norm
$$N' : x \mapsto \sup_{k \in \kappa} |x_k|,$$
where $(x_k)_{k \in \kappa}$ denotes the family of coordinates of the vector $x$ in the basis $(e_k)_{k \in \kappa.}$
Replacing $N_k$ with $N'+N_k$ if necessary, we can assume that $N' \leq N_k$ for all $k \in \kappa$.

Given $k \in \kappa$, we set
$$B_k:=\Bigl\{x\in V : \; N_k(x-e_k) < \frac{1}{3}\Bigr\},$$
the open ball with center $e_k$ and radius $\frac{1}{3}$ with respect to $N_k$. This is an open subset with respect to $N_k$,
whence it is norm-openable. The sets $B_k$ are pairwise disjoint: indeed, for all distinct $k,l$ in $\kappa$, the existence of some $x \in B_k \cap B_l$
would yield
$$N'(e_k-e_l) \leq N'(e_k-x)+N'(x-e_l) \leq N_k(e_k-x)+N_l(x-e_l) \leq \frac{2}{3},$$
which is false.

We claim that the subset
$$O:=\underset{k \in \kappa}{\bigcup} B_k$$
is not norm-openable. Assume on the contrary that there exists a norm $N$ on $V$
for which $O$ is open. We shall prove that $N$ dominates each norm $N_k$.

Let $k \in \kappa$.
There is an open ball $B$ of $(V,N)$, included in $O$ and with center $e_k$.
Let $\calD$ be an arbitrary line of $V$ through $e_k$:
the sets $\calD \cap B_l$, with $l \in \kappa$, are pairwise disjoint open line segments whose union includes
the connected set $\calD \cap B$, whence $\calD \cap B$ is included in one of them. Noting that $e_k \in \calD \cap B$, it follows that
$\calD \cap B \subset \calD \cap B_k$.
Varying $\calD$ yields $B \subset B_k$, and it classically follows that $N$ dominates $N_k$.
This contradicts the assumption that $(N_k)_{k \in \kappa}$ be dominated by no norm, and we conclude that $O$ is not openable. Hence, 
the second statement of Theorem \ref{maintheo} is established. 

\subsection{The case of countable-dimensional vector spaces}

Throughout the present section, $V$ denotes a vector space with countable dimension.
Our aim is to prove the first statement of Theorem \ref{maintheo}. The result is obvious if $V$ is finite-dimensional, so we assume
from now on that $V$ is infinite-dimensional.
Here is our key lemma:

\begin{lemma}\label{L1}
Let $O$ be a finite open subset of $V$, $F$ be a finite-dimensional linear subspace of
$V$ equipped with a norm $N$. Assume that $O$ includes the closed unit ball $B$
of $(F,N)$. Let $x \in V \setminus F$ and set $F':=F \oplus \R x$.
Then, $N$ extends to a norm $N'$ on $F'$ whose closed unit ball is included in~$O$.
\end{lemma}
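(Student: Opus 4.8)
The idea is to build the norm $N'$ on $F' = F \oplus \R x$ by choosing, for each scalar $t$, a bounded convex symmetric "slice" of allowable vectors $f + tx$ and taking $N'$ to be the gauge (Minkowski functional) of the convex hull of all these slices together with $B$. Concretely, I would look for $N'$ of the form whose closed unit ball $B'$ is $\mathrm{conv}\bigl(B \cup \{\pm(g + x)\}\bigr)$ for a suitable vector $g \in F$ chosen so that this convex hull stays inside $O$; one then checks $B' \cap F = B$ so that $N'$ restricts to $N$, and that $B'$ is bounded, closed, convex and symmetric with nonempty interior in $F'$, hence is the closed unit ball of a genuine norm $N'$ on $F'$. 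The key point is that $B'$ consists exactly of the segments joining points of $\lambda B$ (for $0 \le \lambda \le 1$) to the points $\pm(1-\lambda)(g+x)$, so it is a compact "double cone" over $B$ with apexes at $\pm(g+x)$; controlling it reduces to controlling finitely many one-parameter families.

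The heart of the matter is producing the vector $g$. Since $F$ is finite-dimensional, $B$ is compact, and $O$ is finite open, $O \cap F''$ is open in $F''$ for $F'' := F \oplus \R x$; but that is a three-dimensional-plus statement only about the subspace $F''$, so $O \cap F''$ is an honest open subset of the finite-dimensional space $F''$ containing the compact set $B \subset F$. By compactness of $B$ and openness of $O \cap F''$, there is $\varepsilon > 0$ such that the $\varepsilon$-neighborhood of $B$ inside $F''$ lies in $O$. Then I would take $g + x$ to be a point of the form $s x$ with $s$ a small positive real — i.e. take $g = 0$ and replace $x$ by $sx$ — small enough that not only $\pm sx$ but the whole double cone $\mathrm{conv}(B \cup \{\pm sx\})$ lies within that $\varepsilon$-neighborhood of $B$; this is possible because as $s \to 0^+$ this double cone shrinks (in Hausdorff distance) to $B$. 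Renaming, $N'$ is the gauge of $\mathrm{conv}(B \cup \{\pm sx\})$, equivalently the norm with $N'(f + tx) = $ the gauge value, and by construction its unit ball sits inside $O$.

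The main obstacle — and the only subtle verification — is checking that this convex hull $B'$ really does meet $F$ in exactly $B$, i.e. that we have not accidentally enlarged the unit ball in the $F$-direction; this is where convexity and the fact that $x \notin F$ are used. A point of $B'$ is a convex combination $\mu b_1 + \nu b_2 + \lambda(sx) + \lambda'(-sx)$ with $b_1, b_2 \in B$; its $x$-coordinate is $s(\lambda - \lambda')$, so lying in $F$ forces $\lambda = \lambda'$, and then the point equals $(\mu b_1 + \nu b_2) + 0 \in B$ by convexity of $B$ after rescaling — one has to track the coefficients carefully but it comes out. Boundedness and closedness of $B'$ are immediate since it is the convex hull of a compact set in a finite-dimensional space, and symmetry is built in; that $0$ is interior follows because $B$ already has nonempty interior in $F$ and adding the $\pm sx$ directions fills out the last dimension. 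Once $B'$ is verified to be a compact convex symmetric body with $0$ in its interior and $B' \cap F = B$ and $B' \subset O$, its gauge is the desired norm $N'$, completing the proof.
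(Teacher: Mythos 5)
Your proposal is correct, and its engine is the same as the paper's: since $O$ is finite open and $F'=F\oplus\R x$ is finite-dimensional, $O\cap F'$ is open in $F'$, and compactness of $B$ gives a uniform margin $\epsilon>0$ around $B$ inside $O\cap F'$. Where you diverge is in how the extended norm is packaged. The paper fattens $B$ into the cylinder $\{y+\lambda x:\ N(y)\le 1,\ |\lambda|\le\epsilon\}$ and writes the norm explicitly as $N'(y+\lambda x)=\max\bigl(N(y),\epsilon^{-1}|\lambda|\bigr)$; with that formula the facts that $N'$ extends $N$ and that its closed unit ball lies in $O$ are immediate. You instead take the Minkowski gauge of the double cone $B'=\mathrm{conv}\bigl(B\cup\{\pm sx\}\bigr)$ for small $s>0$, which forces two extra verifications: that $B'$ shrinks to $B$ in Hausdorff distance as $s\to 0^+$ (so $B'\subset O$ for small $s$), and that $B'\cap F=B$ (so the gauge restricts to $N$ on $F$). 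Both checks are correctly identified in your sketch and do go through — a point of $B'$ is $\alpha b+(\beta-\gamma)sx$ with $b\in B$, $\alpha+\beta+\gamma=1$, so its distance to $B$ is $O(s)$, and vanishing $x$-component forces $\beta=\gamma$, leaving $\alpha b\in B$ — so your argument is complete, just somewhat more laborious than the paper's, which buys the same conclusion with no convex-geometry bookkeeping. (One cosmetic slip: a general element of the convex hull involves finitely many points of $B$, not just two, but convexity of $B$ lets you merge them, so nothing is lost.)
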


\begin{proof}
For $(y,\lambda)\in F \times \R$, set $\|y+\lambda x\|:=\max(N(y),|\lambda|)$, thereby defining a norm
$\|-\|$ on $F'$.
Since $B$ is compact and disjoint from the closed subspace $F' \setminus O$ of $F'$, there exists a real number $\epsilon>0$ such that $\|a-b\| > \epsilon$ for all $a \in B$ and all
$b \in F' \setminus O$. Let then $(y,\lambda)\in B \times (-\epsilon,\epsilon)$.
Since $\|y-(y+\lambda x)\| \leq \epsilon$ and $y \in B$, we deduce that $y+\lambda x \in O$.
Now, for  $(y,\lambda)\in F \times \R$, we set $N'(y+\lambda x):=\max(N(y),\epsilon^{-1}|\lambda|)$, thereby defining a norm $N'$ on $F'$ that extends $N$ and whose closed unit ball is included in~$O$.
\end{proof}

\begin{lemma}\label{L2}
Let $O$ be a finite open subset of $V$, $F$ be a finite-dimensional linear subspace of
$V$ equipped with a norm $N$. Assume that $O$ includes the closed unit ball $B$
of $(F,N)$. Then, $N$ extends to a norm $N'$ on $V$ whose closed unit ball is included in~$O$.
\end{lemma}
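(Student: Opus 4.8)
The plan is to exhaust $V$ by an increasing chain of finite-dimensional subspaces starting at $F$, build compatible norms on them one step at a time using Lemma \ref{L1}, and glue these norms together into a single norm on $V$. Concretely, since $V$ has countable (infinite) dimension, we may complete a basis of $F$ into a basis $(e_k)_{k \in \omega}$ of $V$; writing $d:=\dim F$, so that $F=\Vect(e_0,\dots,e_{d-1})$, we set $F_0:=F$ and $F_{n+1}:=F_n \oplus \R e_{d+n}$ for all $n \in \omega$. This yields an increasing chain of finite-dimensional subspaces with $\bigcup_{n \in \omega} F_n = V$, and each $e_{d+n}$ lies in $V \setminus F_n$.

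Next I would construct, by induction on $n$, a norm $N_n$ on $F_n$ such that $N_{n+1}$ extends $N_n$ and the closed unit ball of $(F_n,N_n)$ is included in $O$. We start from $N_0:=N$, whose closed unit ball is $B \subset O$ by hypothesis. Given $N_n$ with the stated properties, we apply Lemma \ref{L1} to the finite open set $O$, to the subspace $F_n$ equipped with $N_n$, and to the vector $x:=e_{d+n} \in V \setminus F_n$ (so that $F_n \oplus \R x = F_{n+1}$): this produces a norm $N_{n+1}$ on $F_{n+1}$ that extends $N_n$ and whose closed unit ball is included in $O$, which completes the induction step.

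Finally I would glue the $N_n$'s together. Since $N_m$ extends $N_n$ whenever $m \geq n$, the value $N_n(v)$ does not depend on the choice of an index $n$ with $v \in F_n$, so the formula $N'(v):=N_n(v)$ (for any such $n$) unambiguously defines a map $N' \colon V \to \R$. One checks routinely that $N'$ is a norm: each instance of homogeneity, of the triangle inequality, or of positive-definiteness involves only finitely many vectors, all of which lie in a common $F_n$, on which $N_n$ is a norm. By construction $N'$ restricts to $N_0=N$ on $F$, hence it extends $N$, and its closed unit ball is
$$\{v \in V : N'(v) \leq 1\} = \bigcup_{n \in \omega} \{v \in F_n : N_n(v) \leq 1\} \subset O,$$
since every term of this union is included in $O$. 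This proves the lemma.

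I expect no serious obstacle: the only substantive ingredient is Lemma \ref{L1}, and the sole place where countability of the dimension intervenes is in the existence of a countable exhausting chain $F=F_0 \subset F_1 \subset \cdots$ of $V$ — which is consistent with the fact, established in the previous subsection, that the analogous conclusion fails when $\dim V$ is uncountable.
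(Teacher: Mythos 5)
Your proof is correct and matches the paper's argument essentially verbatim: an exhausting chain $F=F_0 \subset F_1 \subset \cdots$ of finite-dimensional subspaces, an inductive application of Lemma \ref{L1} to produce compatible norms $N_n$ with unit balls in $O$, and gluing them into a norm $N'$ whose closed unit ball is the union of the closed unit balls of the $N_n$'s. No issues.
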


\begin{proof}
Let $(e_n)_{n \in \omega}$ be a basis of a complementary subspace of $F$ in $V$ and, for all
$n \in \omega$, set $F_n:=F \oplus \Vect(e_k)_{0 \leq k<n}$ (in particular $F_0=F$).
The preceding lemma yields, by induction, a sequence $(N_n)_{n \in \omega}$ such that:
\begin{itemize}
\item $N_0=N$;
\item for all $n \in \omega$, $N_n$ is a norm on $F_n$ whose closed unit ball is included in $O$;
\item $N_{n+1}$ extends $N_n$ for all $n \in \omega$.
\end{itemize}
It is then obvious that the functions $N_n$ extend (uniquely) to a function $N'$ on $V$ that extends $N$, is a norm,
and for which the closed unit ball is the union of the closed unit balls of the $N_n$'s. Hence, that closed unit ball
is included in~$O$.
\end{proof}

We are now ready to prove the first statement in Theorem \ref{maintheo}. Let $O$ be a finite open subset of $V$.
Choose a basis $(e_n)_{n \in \omega}$ of $V$.
Let $n \in \omega$ and set $F_n:=\Vect(e_k)_{0 \leq k \leq n}$. Choose an arbitrary norm $N_0$ on $V$.
Since $O \cap F_n$ is an open subset of $F_n$, there exists, for each $x \in O \cap F_n$, a radius $r_x>0$
such that $O$ includes the closed ball $B(x,r_x)$ of $(F_n,N_0)$, and we define $O_{n,x}$ as the corresponding open ball.
Since $O \cap F_n$ is separable (being a subspace of a finite-dimensional vector space) there exists a countable subset $A_n$ of $O \cap F_n$ such that
$\underset{x \in A_n}{\bigcup} O_{n,x}=O \cap F_n$.
Let $x \in A_n$. Obviously, the translated set $O-x$ is still finite open, and it includes the closed unit ball of $F_n$ for $r_x^{-1} N_0$.
Hence, by Lemma \ref{L2} there exists a norm $N_x$ on $V$ that extends $r_x^{-1} (N_0)_{|F_n}$ and whose closed unit ball is included in $O-x$.
It follows that $O_{n,x}$ is included in an open ball for $N_x$ that is included in $O$.

The set of norms $\{N_x \mid n \in \omega, x \in A_n\}$ is countable: Hence Proposition \ref{sequencenormsprop} yields a norm
$N$ on $V$ that dominates all its elements. We shall conclude by proving that
$O$ is open with respect to $N$. Let $n \in \omega$ and $x \in A_n$: there is an open ball for $N_x$ around $x$
that is included in $O$ and includes $O_{n,x}$, and such a ball is also open for $N$ because $N$ dominates $N_x$. It follows that
$O$ is a neighborhood of every point of $O_{n,x}$ with respect to $N$. Varying $n$ and $x$ and remembering that
$O=\underset{n \in \omega,x \in A_n}{\bigcup} O_{n,x}$,
we conclude that $O$ is open in $(V,N)$. This completes the proof of the first statement in Theorem \ref{maintheo}.

\section{Dominated families of norms}\label{normsSECTION}

Here, we prove Proposition \ref{sequencenormsprop} by giving a complete answer to the following question:
given a vector space $V$ and a set $I$, under what conditions is it true that every family $(N_i)_{i \in I}$
of norms on $V$ (indexed over $I$) is dominated?
The answer involves the so-called \textbf{bounding number}: remember the relation $\leq^*$
on ${}^\omega \omega$ defined as follows:
$$f \leq^* g \; \Leftrightarrow \; \exists n \in \omega : \; \forall k \geq n, \; f(k) \leq g(k).$$
A subset $X$ of ${}^\omega \omega$ is called \textbf{bounded} if there exists $g \in {}^\omega \omega$ such that
$\forall f \in X, \; f \leq^* g$, and \textbf{unbounded} otherwise.
The \textbf{bounding number} $\mathfrak{b}$ is the least cardinality for an unbounded subset of ${}^\omega \omega$.
It is easily seen that $\aleph_0<\mathfrak{b}$. Of course, under the continuum hypothesis we would have
$\mathfrak{b}=\mathfrak{c}$ (the cardinality of $\R$), yet it is known that $\mathfrak{b}<\mathfrak{c}$ is consistent with ZFC
(see Section 2 of \cite{Blass}).
It turns out that $\mathfrak{b}$ is connected to many phenomena in general topology: see the surveys \cite{vanDouwen} and \cite{Vaughan}, and
a recent example of a result of this kind \cite{CWcomplexes}.
What follows is our result on dominated families of norms, and it is clear that it implies Proposition \ref{sequencenormsprop}:

\begin{theo}\label{normstheo}
Let $V$ be a vector space, and $I$ be a set.
For every $I$-family of norms on $V$ to be dominated, it is necessary and sufficient that one of the following
conditions holds:
\begin{enumerate}[(i)]
\item $V$ is finite-dimensional;
\item $I$ is finite;
\item $V$ has countable dimension and $\card I<\mathfrak{b}$;
\item $I$ is countable and $\dim V<\mathfrak{b}$.
\end{enumerate}
\end{theo}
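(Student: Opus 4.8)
The plan is to prove the two implications separately, in both directions funnelling the domination question for a family $(N_i)_{i\in I}$ into a purely combinatorial statement about the array $\bigl(N_i(e_j)\bigr)_{i,j}$ relative to a fixed Hamel basis $(e_j)_j$ of $V$, together with the characteristic feature of $\mathfrak b$ (that a set of fewer than $\mathfrak b$ functions in ${}^\omega\omega$ is always $\leq^*$-bounded). Throughout I would use only the triangle inequality in the form $N_i(x)\leq\sum_j|x_j|\,N_i(e_j)$. Sufficiency of (i) and (ii) is immediate: all norms on a finite-dimensional space are equivalent, and $\sum_{i\in I}N_i$ dominates a finite family. For (iii), fix a basis $(e_n)_{n\in\omega}$ of $V$; since there are fewer than $\mathfrak b$ indices $i$, the functions $n\mapsto\lceil N_i(e_n)\rceil$ in ${}^\omega\omega$ are $\leq^*$-dominated by a single $H\in{}^\omega\omega$, and --- absorbing the finitely many exceptional values of each into a constant, which uses only $\mathfrak b>\aleph_0$ --- one gets $N_i\leq D_iN'$ where $N'(x):=\sum_n|x_n|\,H(n)$. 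For (iv), fix a basis $(e_j)_{j\in J}$ and identify $I$ with $\omega$; since there are fewer than $\mathfrak b$ indices $j$, the functions $n\mapsto\lceil N_n(e_j)\rceil$ are $\leq^*$-dominated by a single $H\in{}^\omega\omega$ (which one may take $\geq1$), and the same absorption argument shows that $N'(x):=\sup_n N_n(x)/H(n)$ is finite on $V$, hence a norm, with $N_n\leq H(n)\,N'$.

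For necessity I would argue contrapositively, assuming none of (i)--(iv) holds. Two easy reductions come first: a norm on a subspace $V_0$ spanned by part of a basis extends to a norm on $V$ (split $V=V_0\oplus U$ and add a fixed norm on $U$), domination descending by restriction; and enlarging an undominated family leaves it undominated, so after composing with a surjection it is enough to build an undominated family on such a $V_0$ indexed by any set of size $\leq\card I$. The pivotal remark is then: if $V_0=\bigoplus_{j\in J}\R e_j$ and $N_i(x):=\sum_{j\in J}|x_j|\,M(i,j)$ for an array $M$ of strictly positive reals, a norm $N$ dominating every $N_i$ would give $M(i,j)=N_i(e_j)\leq C_i\,N(e_j)$, i.e.\ positive reals $(C_i)_i$ and $(r_j)_j$ with $M(i,j)\leq C_ir_j$ for all $i,j$; so it suffices to produce an array $M$ for which no such rank-one majorant exists. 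The failure of (i)--(iv) leaves three configurations to treat: (1) $\card I\geq\mathfrak b$; (2) $\card I=\aleph_0$ and $\dim V\geq\mathfrak b$; (3) $\card I$ and $\dim V$ both uncountable.

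In case (1) I would work inside a countably-infinite-dimensional subspace of $V$ (so $J=\omega$), take a $\leq^*$-unbounded family $\{g^{(\xi)}:\xi<\mathfrak b\}\subseteq{}^\omega\omega$ with all values $\geq1$, and set $M(i,n):=g^{(\phi(i))}(n)$ for a surjection $\phi:I\to\mathfrak b$; a relation $M(i,n)\leq C_ir_n$ would force $g^{(\xi)}(n)\leq C\,r_n$ for each $\xi$ (with $C$ depending on $\xi$), hence $g^{(\xi)}\leq^*\bigl(n\mapsto n\lceil r_n\rceil\bigr)$ for every $\xi$ (the constant being eventually $\leq n$), contradicting unboundedness. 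Case (2) is the transpose: take $J=\mathfrak b$, $M(n,\xi):=g^{(\xi)}(n)$, and run the same computation with the two indices interchanged, the sequence $(C_n)_n$ now playing the role of the common function. Case (3) cannot go through $\mathfrak b$ since $\mathfrak b$ may exceed $\aleph_1$, so it instead exploits the regularity of $\omega_1$: inside an $\aleph_1$-dimensional subspace I would take $I=J=\omega_1$, fix for each infinite $\alpha<\omega_1$ a bijection $b_\alpha:\omega\to\alpha$, and set $M(\alpha,\beta):=b_\alpha^{-1}(\beta)+1$ if $\beta<\alpha$ (and $\alpha$ infinite), $M(\alpha,\beta):=1$ otherwise. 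Given a relation $M(\alpha,\beta)\leq C_\alpha r_\beta$, I would pick an integer $M_0$ with $\Gamma:=\{\alpha:C_\alpha\leq M_0\}$ and $R:=\{\beta:r_\beta\leq M_0\}$ both uncountable (possible since $\omega_1$ is not a countable union of countable sets); then for each infinite $\alpha\in\Gamma$ the set $R\cap\alpha$ lies in $\{b_\alpha(0),\dots,b_\alpha(M_0^2-1)\}$ and so has at most $M_0^2$ elements, whereas $R$ is uncountable and $\Gamma$ is cofinal in $\omega_1$, so some infinite $\alpha\in\Gamma$ exceeds the $(M_0^2+1)$-th element of $R$ --- a contradiction. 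Feeding each construction back through the two reductions finishes the argument.

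I expect the bulk (sufficiency, and cases (1)--(2) of necessity) to be routine bookkeeping around the definition of $\mathfrak b$ and the triangle inequality; the genuine obstacle is case (3), where $\mathfrak b$ is of no use and one must design the array $M$ through the bijections $b_\alpha$ so that a counting/pressing-down argument on $\omega_1$ rules out every rank-one majorant.
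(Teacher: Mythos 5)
Your proof is correct, but it takes a genuinely more self-contained route than the paper. The paper factors the theorem into two pieces: Proposition~\ref{PtoNorms}, which shows that domination of every $I$-family of norms on a $\kappa$-dimensional space is equivalent to the combinatorial property $\calP(\kappa,I)$, and Hathaway's characterization of $\calP$ (Theorem~\ref{HathawayThm}), which is cited rather than proved (only the instances $\calP(\omega,\omega)$ and $\neg\calP(\omega_1,\omega_1)$ are proved in the paper, as these suffice for Theorem~\ref{maintheo}). You carry out the same underlying translation --- a dominating norm $N$ forces the rank-one majorization $N_i(e_j)\leq C_i\,N(e_j)$ of the array $(N_i(e_j))$, and conversely a suitable majorization of that array yields a dominating norm --- but your positive direction is implemented more simply, via the triangle inequality $N_i(x)\leq\sum_j|x_j|\,N_i(e_j)$ and explicit weighted $\ell^1$- or sup-type dominating norms, where the paper instead passes through equivalence constants $c(i,J)$ on finite-dimensional subspaces indexed by the finite subsets $\calP_f(\kappa)$. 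Moreover, instead of invoking Hathaway as a black box, you reprove the needed combinatorics inline: your sufficiency cases (iii) and (iv) are exactly the statement that fewer than $\mathfrak{b}$ functions are $\leq^*$-bounded, your necessity cases (1) and (2) use an unbounded family of size $\mathfrak{b}$ (with the harmless trick $C_\xi\leq n$ eventually to absorb the constants), and your case (3) is, up to cosmetic changes (bijections $b_\alpha:\omega\to\alpha$ and a product bound instead of injections and a max bound, plus a counting of $R\cap\alpha$), the paper's proof of $\neg\calP(\omega_1,\omega_1)$. Your case split for necessity does exhaust the failure of (i)--(iv), and your two reductions (extending norms from a subspace spanned by part of a basis by adding a fixed norm on a complement, and pulling an undominated family back along a surjection of index sets) are sound. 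What the paper's modular route buys is the clean equivalence with $\calP(\kappa,I)$, hence access to the full strength of Hathaway's theorem; what your route buys is a shorter, fully self-contained argument, at the cost of not recording that equivalence.
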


The cornerstone of the proof is a connection between the problem of dominating families of norms
and the problem of dominating functions of two variables by the product of two functions of one variable (see \cite{Hathaway}).

\subsection{Dominated families of functions}

\begin{Def}\label{dominFunction}
Let $X$ be a set.
Let $(f_i)_{i \in I}$ be a family of functions from $X$ to $\R$.
We say that $(f_i)_{i \in I}$ is \textbf{dominated} by some function $g : X \rightarrow \R$ when, for all
$i \in I$, there exists a positive real number $c_i$ such that
$$\forall x \in X, \; |f(x)| \leq c_i\,|g(x)|.$$
\end{Def}

\begin{Def}
Let $X$ and $Y$ be sets. A function $f : X \times Y \rightarrow \R$ is called \textbf{separably dominated}
when there are functions $g : X \rightarrow \R$ and $h : Y \rightarrow \R$ such that
$$\forall (x,y)\in X \times Y, \; |f(x,y)| \leq |g(x)|\,|h(y)|.$$
\end{Def}

The following result is then obvious:

\begin{lemma}\label{sepdomlemma}
Let $X$ be a set.
Let $(f_i)_{i \in I}$ be a family of functions from $X$ to $\R$.

Then, $(f_i)_{i \in I}$ is dominated if and only if the function $(i,x)\in I \times X \mapsto f_i(x)$
is separably dominated.
\end{lemma}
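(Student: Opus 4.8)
The statement is essentially a tautology once both definitions are spelled out, so the plan is simply to match the two formulations term by term, letting the index set $I$ play the role of one of the two factors in the separable-domination condition.

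For the forward implication, I would suppose $(f_i)_{i\in I}$ is dominated by some $g\colon X\to\R$, with associated positive constants $c_i$. Defining $h\colon I\to\R$ by $h(i):=c_i$, one has, for every $(i,x)\in I\times X$,
$$|f_i(x)|\le c_i\,|g(x)|=|h(i)|\,|g(x)|,$$
which is precisely the assertion that the function $(i,x)\mapsto f_i(x)$ is separably dominated, with $h$ on the $I$-factor and $g$ on the $X$-factor.

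For the converse, I would suppose $(i,x)\mapsto f_i(x)$ is separably dominated, say $|f_i(x)|\le |h(i)|\,|g(x)|$ for all $i\in I$ and $x\in X$, with $h\colon I\to\R$ and $g\colon X\to\R$. Setting $c_i:=|h(i)|+1$, which is a positive real, one gets $|f_i(x)|\le |h(i)|\,|g(x)|\le c_i\,|g(x)|$ for all $x$, so $(f_i)_{i\in I}$ is dominated by $g$.

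The only point requiring any care is the strict positivity demanded of the constants $c_i$ in the definition of a dominated family: in the converse direction $h(i)$ may vanish, which is why I would use $|h(i)|+1$ (or $\max(|h(i)|,1)$) rather than $|h(i)|$ itself. Beyond this trivial bookkeeping there is no genuine obstacle, which justifies calling the lemma obvious.
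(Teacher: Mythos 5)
Your proof is correct and is exactly the term-by-term unwinding of the two definitions that the paper has in mind when it calls the lemma obvious (the paper gives no written proof); your remark about replacing $|h(i)|$ with $|h(i)|+1$ to guarantee strict positivity of the constants is the only non-trivial point and you handle it properly.
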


The following result is also easy:

\begin{lemma}\label{equivalenceDomination}
Let $X$ and $Y$ be sets and $f : X \times Y \rightarrow \R$ be a function.
Then, $f$ is separably dominated if and only if there exist functions $G : X \rightarrow \omega$ and $H : Y \rightarrow \omega$
such that $\forall (x,y)\in X \times Y, \; |f(x,y)| \leq \max(G(x),H(y))$.
\end{lemma}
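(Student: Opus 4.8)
The plan is to prove both implications directly by elementary inequalities, the underlying point being that a bound of the form ``product of two one-variable functions'' and a bound of the form ``$\max$ of two one-variable functions'' are interchangeable up to harmless rescalings, and that rounding allows one to land in $\omega$ (which, following the paper's convention, I identify with the non-negative integers).

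For the implication from separable domination to the $\max$-bound, I would start from functions $g : X \rightarrow \R$ and $h : Y \rightarrow \R$ with $|f(x,y)| \leq |g(x)|\,|h(y)|$ for all $(x,y)$. First I replace $g$ by $\max(|g|,1)$ and $h$ by $\max(|h|,1)$; this only enlarges the right-hand side, so I may assume $g \geq 1$ and $h \geq 1$ everywhere. The key observation is then that $g(x)h(y) \leq \max(g(x),h(y))^2 = \max\bigl(g(x)^2,h(y)^2\bigr)$, simply because the smaller of the two factors is bounded by the larger. Finally I set $G(x) := \lceil g(x)^2\rceil$ and $H(y) := \lceil h(y)^2\rceil$, which take values in $\omega$ since $g^2 \geq 1$ and $h^2 \geq 1$, and I conclude $|f(x,y)| \leq \max\bigl(g(x)^2,h(y)^2\bigr) \leq \max(G(x),H(y))$.

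For the converse, I would start from $G : X \rightarrow \omega$ and $H : Y \rightarrow \omega$ with $|f(x,y)| \leq \max(G(x),H(y))$, and set $g(x) := \max(G(x),1)$ and $h(y) := \max(H(y),1)$, both $\geq 1$. Then $\max(G(x),H(y)) \leq g(x)h(y)$: if the maximum on the left is attained by $G(x)$, then $g(x) \geq G(x)$ while $h(y) \geq 1$, so $g(x)h(y) \geq G(x)$, and the case where the maximum is $H(y)$ is symmetric. Hence $|f(x,y)| \leq |g(x)|\,|h(y)|$, so $f$ is separably dominated. I do not expect any real obstacle here, since everything reduces to the two displayed inequalities; the only points requiring a little care are making sure the auxiliary functions really take values in $\omega$ (hence the ceilings and the $\max(\cdot,1)$ truncations) and that each rescaling genuinely preserves the inequality in play.
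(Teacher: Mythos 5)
Your proof is correct and follows essentially the same route as the paper's: the forward direction rests on the inequality $|g(x)|\,|h(y)| \leq \max\bigl(g(x)^2,h(y)^2\bigr)$ followed by taking ceilings, and the converse on the trivial bound of a maximum by a product, exactly as in the paper. The only cosmetic difference is that you normalize with $\max(\cdot,1)$ where the paper adds $1$ (using $G+1$ and $H+1$), which changes nothing of substance.
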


\begin{proof}[Proof of Lemma \ref{equivalenceDomination}]
If there are functions $g : X \rightarrow \R$ and $h : Y \rightarrow \R$ such that
$$\forall (x,y)\in X \times Y, \; |f(x,y)| \leq |g(x)|\,|h(y)|,$$
then
$$\forall (x,y)\in X \times Y, \; |f(x,y)| \leq \max\bigl(\bigl\lceil g(x)^2\bigr\rceil, \bigl\lceil h(y)^2\bigr\rceil\bigr).$$
Conversely, if there are functions $G : X \rightarrow \omega$ and $H : Y \rightarrow \omega$ such that
$$\forall (x,y)\in X \times Y, \; |f(x,y)| \leq \max\bigl(G(x),H(y)\bigr),$$
then
$$\forall (x,y)\in X \times Y, \; |f(x,y)| \leq \,|G(x)+1|\,|H(y)+1|.$$
\end{proof}

Finally, if a function $f : X \times Y \rightarrow \R$ is not separably dominated, then
neither is the integer-valued function $(x,y) \mapsto \lceil f(x,y)\rceil$.

\begin{Not}
Let $X$ and $Y$ be sets. We denote by $\calP(X,Y)$ the assertion that every function
from $X \times Y$ to $\omega$ is dominated, which is equivalent to the assertion that every function from
$X \times Y$ to $\R$ is dominated.
\end{Not}

Obviously, $\calP(X,Y)$ is equivalent to $\calP(Y,X)$. Moreover,
given sets $X'$ and $Y'$ with $\card X' \leq \card X$ and $\card Y' \leq \card Y$,
the implication $\calP(X,Y) \Rightarrow \calP(X',Y')$ holds. In particular, the validity of $\calP(X,Y)$
depends only on the respective cardinalities of $X$ and $Y$.

The following result was recently proved by Hathaway \cite{Hathaway}:

\begin{theo}[Hathaway]\label{HathawayThm}
Let $X$ and $Y$ be sets. Then, $\calP(X,Y)$ holds if and only if one of the following conditions holds:
\begin{enumerate}[(i)]
\item One of $X$ and $Y$ is finite;
\item One of $X$ and $Y$ is countable, and the other one has cardinality less than~$\mathfrak{b}$.
\end{enumerate}
\end{theo}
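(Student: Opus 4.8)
The plan is to work throughout with the reformulation from Lemma~\ref{equivalenceDomination}, so that "$f$ is separably dominated" means the existence of $G : X \to \omega$ and $H : Y \to \omega$ with $f(x,y) \le \max(G(x),H(y))$ everywhere, and to exploit the fact, already noted before the statement, that the truth of $\calP(X,Y)$ depends only on $\card X$ and $\card Y$ and is monotone under passing to subsets. Sufficiency and necessity will be treated separately. For sufficiency I must show that conditions (i) and (ii) each force $\calP(X,Y)$. For necessity, assuming neither holds, I first observe that both $X$ and $Y$ are then infinite and that, after renaming, either $X$ is countable while $\card Y \ge \mathfrak{b}$, or both cardinalities are at least $\aleph_1$. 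By monotonicity it therefore suffices to exhibit one undominated function in each of the two critical cases $\calP(\omega,\mathfrak{b})$ and $\calP(\omega_1,\omega_1)$.

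For sufficiency, case (i) is immediate: if $X=\{x_1,\dots,x_n\}$ is finite, then $G\equiv 0$ and $H(y):=\max_i f(x_i,y)$ work. For case (ii), say $X$ is countable and $\card Y<\mathfrak{b}$; identifying $X$ with $\omega$, each $f_y : k \mapsto f(k,y)$ lies in ${}^\omega\omega$, and the family $\{f_y : y \in Y\}$, having fewer than $\mathfrak{b}$ members, is $\leq^*$-bounded by some $g$. Setting $G:=g$ and $H(y):=\max\{f_y(k) : k<n_y\}$, where $n_y$ witnesses $f_y \leq^* g$, yields $f(k,y)\le\max(G(k),H(y))$. Since $\aleph_0<\mathfrak{b}$, this also covers the subcase where both sets are countable, so the sufficiency direction is complete.

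For the first critical case, I would fix an unbounded family $\{f_y : y \in Y\}\subseteq{}^\omega\omega$ with $\card Y=\mathfrak{b}$ and define $f(k,y):=f_y(k)$ on $\omega\times Y$. If this $f$ were separably dominated by $(G,H)$, then for each $y$ one has $f_y(k)\le\max(G(k),H(y))$; comparing with $g(k):=\max(G(k),k)$, one gets $f_y(k)\le g(k)$ for every $k>H(y)$, so $f_y\leq^* g$ for all $y$ simultaneously, contradicting unboundedness. Hence $\calP(\omega,\mathfrak{b})$ fails, and monotonicity propagates the failure to every pair with a countable side and the other side of size at least $\mathfrak{b}$.

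The main obstacle is the second critical case, $\calP(\omega_1,\omega_1)$, which must fail in ZFC with no reference to $\mathfrak{b}$. Here I would use a Sierpi\'nski-type coloring: for each $\beta<\omega_1$ fix an injection $e_\beta:\beta\to\omega$ and set $f(\alpha,\beta):=e_\beta(\alpha)$ when $\alpha<\beta$ (and, say, $0$ otherwise). The crux is to show that $f$ is unbounded on every product $A\times B$ of uncountable subsets of $\omega_1$: given a bound $N$, pick $N+2$ elements $\alpha_0<\dots<\alpha_{N+1}$ of $A$ and then, using that $B$ is uncountable, some $\beta\in B$ exceeding all of them; injectivity of $e_\beta$ forces $e_\beta(\alpha_j)>N$ for some $j$, giving $f(\alpha_j,\beta)>N$. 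To convert this into failure of separable domination, I would note that for any $G:\omega_1\to\omega$ the level set $\{\alpha:G(\alpha)\le m\}$ is uncountable for some $m$, and similarly for $H$; applying the unboundedness of $f$ to these two uncountable sets produces a pair $(\alpha,\beta)$ with $f(\alpha,\beta)>\max(G(\alpha),H(\beta))$, so no $(G,H)$ can dominate $f$. Monotonicity then yields the failure of $\calP(X,Y)$ whenever both sets are uncountable, completing the necessity direction and hence the characterization.
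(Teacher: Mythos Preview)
Your argument is correct in every part. The case analysis in the necessity direction is sound, the bound $g(k):=\max(G(k),k)$ neatly forces $f_y\leq^* g$ from the domination inequality, and your Sierpi\'nski-type construction for $\neg\calP(\omega_1,\omega_1)$ works exactly as claimed.

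As for comparison with the paper: the paper does \emph{not} prove Theorem~\ref{HathawayThm} in full. It attributes the general statement to Hathaway \cite{Hathaway} and, because only the two extreme cases $\calP(\omega,\omega)$ and $\neg\calP(\omega_1,\omega_1)$ are needed for Proposition~\ref{sequencenormsprop}, it supplies short proofs of just those two. Your treatment of $\neg\calP(\omega_1,\omega_1)$ is essentially identical to the paper's (same injection-based coloring, same pigeonhole on level sets of $G$ and $H$), with the roles of the two coordinates interchanged. What you add beyond the paper is the sufficiency argument for condition~(ii) via a $\leq^*$-bound on fewer than $\mathfrak{b}$ functions, and the failure of $\calP(\omega,\mathfrak{b})$ via an unbounded family; both are the standard arguments and are precisely what is needed to upgrade the paper's two special cases to the full characterization.
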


In particular, $\calP(\omega,\omega)$ holds but $\calP(\omega_1,\omega_1)$ fails.
Since those two cases are the only ones relevant to our study on norm-openable sets and since the proofs are short, we include them below:

\begin{proof}[Proof of $\calP(\omega,\omega)$]
Letting $f : \omega \times \omega \rightarrow \omega$ and setting
$g : n \mapsto \max(f(k,l))_{0 \leq k,l \leq n}$, we note that $g$ is non-decreasing and hence
$$\forall (n,p)\in \omega^2, \; f(n,p) \leq g\bigl(\max(n,p)\bigr) = \max\bigl(g(n),g(p)\bigr).$$
\end{proof}

\begin{proof}[Proof of $\neg \calP(\omega_1,\omega_1)$]
For all $\alpha \in \omega_1$, we choose an injection
$$f_\alpha : \{\beta \in \omega_1 : \; \beta \leq \alpha\} \hookrightarrow \omega.$$
We set
$$F : (\alpha,\beta)\in (\omega_1)^2 \mapsto \begin{cases}
f_\alpha(\beta) & \text{if $\beta \leq \alpha$} \\
0 & \text{otherwise.}
\end{cases}$$
We prove that $F$ is not separably dominated. Assume on the contrary that there are functions $G$ and $H$ in ${}^{\omega_1}\omega$ such that
$$\forall (\alpha,\beta) \in (\omega_1)^2, \; F(\alpha,\beta) \leq \max(G(\alpha),H(\beta)).$$
Since $\omega_1$ is uncountable, there are integers $n$ and $p$ such that $G^{-1}(\lcro 0,n\rcro)$ and $H^{-1}(\lcro 0,p\rcro)$ are uncountable,
and hence unbounded in $\omega_1$. Then, we take an infinite countable subset $B$ of $H^{-1}(\lcro 0,p\rcro)$,
and we successively find $\alpha \in \omega_1$ such that $\forall x \in B, \; x\leq \alpha$, and then
$\alpha' \in G^{-1}(\lcro 0,n\rcro)$ such that $\alpha \leq \alpha'$.
Then we would have $f_{\alpha'}(\beta) \leq \max(n,p)$ for all $\beta \in B$, contradicting the injectivity of~$f_{\alpha'}$.
\end{proof}

\subsection{Dominated families of norms}

\begin{lemma}\label{dominFunctiontoNorm}
Let $(N_i)_{i \in I}$ be a family of norms on the vector space $V$.
Then, $(N_i)_{i \in I}$ is dominated as a family of norms if and only if it is dominated as a family of functions.
\end{lemma}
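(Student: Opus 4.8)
The plan is to prove both implications, the nontrivial one being that domination as a family of functions implies domination as a family of norms. The reverse implication is immediate: if a norm $N'$ dominates every $N_i$ in the sense of norms, then for each $i$ there is $\alpha_i>0$ with $N_i \leq \alpha_i N'$, and since $N'$ is a single real-valued function on $V$, this exhibits $(N_i)_{i\in I}$ as dominated by $N'$ in the sense of Definition~\ref{dominFunction} (with $c_i=\alpha_i$).

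For the forward direction, suppose $(N_i)_{i\in I}$ is dominated as a family of functions by some $g:V\to\R$, so that for each $i$ there is $c_i>0$ with $N_i(x)\leq c_i\,|g(x)|$ for all $x\in V$. The issue is that $g$ need not be a norm, nor even subadditive, so we must manufacture an honest norm $N'$ out of it. The natural candidate is the \emph{Minkowski gauge} of the absolutely convex hull of the sublevel set $\{x\in V:\;|g(x)|\leq 1\}$; equivalently, one can define directly
$$N'(x):=\inf\Bigl\{\sum_{j=1}^m |\lambda_j|\,|g(x_j)| \;:\; m\in\omega,\ x=\sum_{j=1}^m \lambda_j x_j,\ \lambda_j\in\R,\ x_j\in V\Bigr\}.$$
First I would check that $N'$ is a seminorm: it is clearly positively homogeneous and subadditive by construction (concatenating two representations). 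Then I would verify finiteness of $N'$ on all of $V$ — which holds because any fixed norm $N_0$ on $V$ satisfies $N_0 \leq c_0 |g|$ after replacing $g$ if needed, but more simply: pick one of the $N_i$, say we may assume the family is nonempty and includes, or we adjoin, a genuine norm; alternatively observe $g$ cannot vanish outside $0$ since $N_i(x)\leq c_i|g(x)|$ forces $g(x)\neq 0$ whenever $x\neq 0$, and a one-term representation $x=1\cdot x$ gives $N'(x)\leq |g(x)|<\infty$. Next, positive-definiteness: if $x\neq 0$ then for any representation $x=\sum\lambda_j x_j$ we have, using subadditivity of the norm $N_i$ and $N_i\leq c_i|g|$, that $0<N_i(x)\leq \sum|\lambda_j|N_i(x_j)\leq c_i\sum|\lambda_j|\,|g(x_j)|$, so the infimum defining $N'(x)$ is at least $N_i(x)/c_i>0$. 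This same inequality simultaneously shows $N_i\leq c_i N'$ for every $i$, i.e. $N'$ dominates the family in the norm sense.

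The main obstacle is precisely the positive-definiteness step, i.e.\ ensuring $N'$ does not collapse to a seminorm with nontrivial kernel; the key realization that makes it work is that the hypothesis $N_i\leq c_i|g|$ is not just a pointwise bound on $g$ but, once combined with the triangle inequality for the genuine norm $N_i$, propagates to every linear-combination representation and thereby bounds $N'$ \emph{below} by $c_i^{-1}N_i$. After that, it only remains to note $N'$ is finite everywhere (the one-term representation bound), so $N'$ is a genuine norm on $V$, and we are done. I would close by remarking that, conversely, one may also simply take $N'(x):=\max(N_{i_0}(x),\sup\text{-type expression})$ is not available here since $I$ may be large, which is exactly why the convex-hull construction is the right tool.
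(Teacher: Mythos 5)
Your proof is correct, but it follows a genuinely different route from the paper's. The paper's converse direction is a one-liner: writing $N_i \le c_i\,|g|$ with all $c_i>0$, it sets $N(x):=\sup_{i\in I} N_i(x)/c_i$; for each fixed $x$ this supremum is finite (every term is at most $|g(x)|$), a pointwise-finite supremum of norms is a norm, and $N_i \le c_i N$ by construction. This works for $I$ of arbitrary cardinality, so your closing remark that a sup-type dominating norm ``is not available here since $I$ may be large'' is mistaken: the size of $I$ is irrelevant because finiteness is checked at each fixed vector $x$, where the bound $|g(x)|$ applies uniformly in $i$. Your alternative --- convexifying the dominating function via the infimum over decompositions --- is nevertheless valid: positive homogeneity, subadditivity and finiteness are as you say, and your key inequality $N_i(x)\le \sum_j |\lambda_j|\, N_i(x_j)\le c_i\sum_j|\lambda_j|\,|g(x_j)|$ correctly yields both $c_i^{-1}N_i\le N'$ (hence positive-definiteness, provided $I\neq\emptyset$; the empty case is trivial since norms always exist) and the desired domination $N_i\le c_i N'$. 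Two small repairs: over a complex $V$ you must let the coefficients $\lambda_j$ range over $\C$ rather than $\R$, since with real coefficients $N'$ need not be absolutely homogeneous for complex scalars (all your estimates go through unchanged with complex $\lambda_j$); and the description of $N'$ as the gauge of the absolutely convex hull of $\{|g|\le 1\}$ is not literally accurate ($g$ is not homogeneous), but nothing in your argument uses it. What your construction buys is the extra upper bound $N'\le |g|$, i.e.\ a norm sandwiched between the $c_i^{-1}N_i$ and $|g|$; what the paper's construction buys is brevity.
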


\begin{proof}
The direct implication is obvious. For the converse, assume that $(N_i)_{i \in I}$ is dominated in the meaning of Definition \ref{dominFunction}.
This yields functions $g : I \rightarrow [0,+\infty)$ and $f : V \rightarrow [0,+\infty)$ such that
$$\forall (i,x)\in I \times V, \; N_i(x) \leq g(i) f(x).$$
Replacing $g$ by $g+1$ if necessary, we can assume that $g$ takes positive values only.
Then, for all $x \in V$, the set $\{N_i(x)/g(i) \mid i \in I\}$ is bounded, and it is then a standard observation that
$$N : x \mapsto \underset{i \in I}{\sup} \,\frac{N_i(x)}{g(i)}$$
is a norm on $V$. Obviously $\forall i \in I, \; N_i \leq g(i) N$, whence $N$ dominates all norms~$N_i$.
\end{proof}

We turn to our main result. It is clear that combining it with Theorem \ref{HathawayThm} yields Theorem \ref{normstheo}:

\begin{prop}\label{PtoNorms}
Let $\kappa$ be a cardinal and $I$ be a set. Let $V$ be a vector space with dimension $\kappa$.
Then $\calP(\kappa,I)$ holds if and only if every $I$-family of norms on $V$
is dominated.
\end{prop}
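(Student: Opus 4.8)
**The plan is to prove the equivalence by relating a family of norms on $V$ to an integer-valued function on $\kappa \times I$, in both directions.**

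For the forward direction, I would assume $\calP(\kappa,I)$ holds and take an arbitrary $I$-family of norms $(N_i)_{i\in I}$ on $V$. Fix a basis $(e_\alpha)_{\alpha\in\kappa}$ of $V$. The idea is to reduce domination of the $N_i$ to a statement about the values $N_i(e_\alpha)$, which is close to but not quite what $\calP(\kappa,I)$ gives. So first I would pass to the supremum norm $N'(x) = \sup_\alpha |x_\alpha|$ relative to this basis — but this is not a norm when $\kappa$ is infinite (the sup need not be finite), so instead I should work more carefully: I want to show it suffices to dominate each $N_i$ on the basis vectors after a suitable renormalization. Concretely, define $F : \kappa \times I \to \omega$ by $F(\alpha,i) := \lceil N_i(e_\alpha) \rceil$. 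By $\calP(\kappa,I)$, $F$ is dominated, so by Lemma~\ref{equivalenceDomination} there are $G:\kappa\to\omega$ and $H:I\to\omega$ with $N_i(e_\alpha) \leq \max(G(\alpha),H(\alpha))$... wait, $\max(G(\alpha),H(i))$. Now I would change basis by rescaling: replace $e_\alpha$ by $e_\alpha/(G(\alpha)+1)$ — i.e. work with the basis-dependent norm $N^{\flat}(x) := \sum_\alpha |x_\alpha|\,(G(\alpha)+1)$ on $V$ (a genuine norm, being an $\ell^1$-type norm in the given basis). For $x = \sum_\alpha x_\alpha e_\alpha$ (a finite sum), $N_i(x) \leq \sum_\alpha |x_\alpha| N_i(e_\alpha) \leq \sum_\alpha |x_\alpha|(G(\alpha)+1)(H(i)+1) = (H(i)+1)\,N^{\flat}(x)$. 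Hence $N^{\flat}$ dominates every $N_i$, and the family is dominated.

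For the converse, I would assume every $I$-family of norms on $V$ is dominated and derive $\calP(\kappa,I)$. Let $F : \kappa \times I \to \omega$ be arbitrary; I must show $F$ is dominated. Fix a basis $(e_\alpha)_{\alpha\in\kappa}$. For each $i\in I$, define a norm $N_i$ on $V$ by $N_i(x) := \sum_\alpha |x_\alpha|\,(F(\alpha,i)+1)$ (again an $\ell^1$-type norm in the fixed basis, well-defined on finite sums). By hypothesis the family $(N_i)_{i\in I}$ is dominated, so by Lemma~\ref{dominFunctiontoNorm} it is dominated as a family of functions: there are $g:I\to[0,\infty)$ and $f:V\to[0,\infty)$ with $N_i(x)\leq g(i)f(x)$ for all $i,x$. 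Evaluating at $x=e_\alpha$ gives $F(\alpha,i)+1 = N_i(e_\alpha) \leq g(i)\,f(e_\alpha)$, so setting $G(\alpha):=f(e_\alpha)$ and $H(i):=g(i)$ we get $F(\alpha,i) \leq G(\alpha)H(i)$, i.e. $F$ is dominated. This proves $\calP(\kappa,I)$.

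The main subtlety to get right is the well-definedness of the $\ell^1$-type norms: for any vector $x\in V$ only finitely many coordinates $x_\alpha$ are nonzero, so the defining sums are finite and the $N_i$ are genuine norms regardless of how large $\kappa$ is — this is exactly what lets the argument sidestep the convergence issues that plagued the naive supremum norm. Everything else is a routine check of the norm axioms and of the inequalities above, and the cited Lemmas~\ref{equivalenceDomination} and~\ref{dominFunctiontoNorm} package the translation between integer-valued domination, separable domination, and domination of norms. I expect no serious obstacle beyond being careful that the constants $c_i$ in Definition~\ref{dominFunction} and the multiplicative structure match up correctly in the $\max$-versus-product bookkeeping.
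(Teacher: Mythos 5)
Your proof is correct, and half of it takes a genuinely different (and shorter) route than the paper's. The direction ``every $I$-family dominated $\Rightarrow \calP(\kappa,I)$'' is essentially the paper's own argument: you build weighted norms out of a given $F:\kappa\times I\to\omega$ and evaluate them on basis vectors; the only difference is that you use weighted $\ell^1$-type norms where the paper uses weighted supremum norms, which changes nothing. For the converse direction the paper argues differently: it fixes an arbitrary norm $N'$, uses equivalence of norms on each finite-dimensional coordinate subspace $V_J$ to produce constants $c(i,J)$ indexed by $I\times\calP_f(\kappa)$, invokes $\calP(I,\calP_f(\kappa))$ (using that $\calP_f(\kappa)$ is equipotent to $\kappa$ when $\kappa$ is infinite, which is why it treats finite $\kappa$ separately), and then passes through Lemmas \ref{sepdomlemma} and \ref{dominFunctiontoNorm}. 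You instead dominate each $N_i$ directly by the single weighted $\ell^1$-norm $N^{\flat}(x)=\sum_\alpha |x_\alpha|\,(G(\alpha)+1)$, using only the values on basis vectors together with the inequality $N_i(x)\leq\sum_\alpha |x_\alpha|\,N_i(e_\alpha)$; this is sound, avoids the detour through finite subsets of $\kappa$, requires no case distinction on $\dim V$, and bypasses Lemma \ref{dominFunctiontoNorm} entirely, at the (harmless) cost of exploiting the basis expansion rather than the abstract domination machinery. One factual slip in your narration, which does not affect the proof since you abandon that route anyway: the supremum norm $x\mapsto\sup_\alpha |x_\alpha|$ \emph{is} a genuine norm for every $\kappa$, because each vector has only finitely many nonzero coordinates in the basis --- indeed the paper itself uses exactly this norm in Section 2 and in its proof of Proposition \ref{PtoNorms}.
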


\begin{proof}
In a finite-dimensional vector space all norms are equivalent. Hence, both conditions hold whenever $\kappa$ is finite, so in the rest of the proof we will assume that $\kappa$ is infinite.
Let $(e_k)_{k \in \kappa}$ be a basis of $V$. Given $x \in V$, we denote by $(x_k)_{k \in \kappa}$
its family of coordinates in that basis, and we denote by $\supp(x)$ the support of that list, i.e.\ the (finite)
set of all $k \in \kappa$ such that $x_k \neq 0$.

Assume first that every $I$-family of norms on $V$
is dominated. Let $f : I \times \kappa \rightarrow \omega$.
For $i \in I$, set
$$N_i : x \mapsto \sup_{k \in \kappa} \,\bigl((f(i,k)+1)\,|x_k|\bigr),$$
which is obviously a norm on $V$.
Assume that some function $h : V \rightarrow \R$ dominates all norms $N_i$.
Then, there is a function $g : I \rightarrow \R$ such that
$$\forall (i,x)\in I \times V, \; N_i(x) \leq |g(i)|\,|h(x)|.$$
Applying this to every vector of $(e_k)_{k \in \kappa}$, we would find
$$\forall i \in I, \; \forall k \in \kappa, \; |f(i,k)| \leq f(i,k)+1=N_i(e_k) \leq |g(i)|\,|h(e_k)|.$$
Hence, $(i,k) \mapsto f(i,k)$ is separably dominated. We conclude that $\calP(I,\kappa)$ holds.

Conversely, assume that $\calP(I,\kappa)$ holds.
Denote by $\calP_f(\kappa)$ the set of all finite subsets of $\kappa$. Since $\kappa$ is infinite,
$\calP_f(\kappa)$ is equipotent to $\kappa$ and we deduce that $\calP(I,\calP_f(\kappa))$ holds.
Now, let $(N_i)_{i \in I}$ be an $I$-family of norms on $V$.
Choose an arbitrary norm $N'$ on $V$.
Let $J \in \calP_f(\kappa)$ and $i \in I$. The restrictions of the norms $N'$ and $N_i$ to the finite-dimensional subspace
$V_J:=\Vect(e_j)_{j \in J}$ are then equivalent, which yields a positive real number $c(i,J)$ such that
$$\forall x \in V_J, \; \forall i \in I, \; N_i(x) \leq c(i,J)\, N'(x).$$
Since $\calP(I,\calP_f(\kappa))$ holds, there are functions $f : I \rightarrow \omega$
and $g : \calP_f(\kappa) \rightarrow \omega$ such that
$$\forall (i,J) \in I \times \calP_f(\kappa), \; c(i,J) \leq f(i)\,g(J).$$
Hence,
$$\forall (i,x)\in I \times V, \; N_i(x) \leq f(i)\,g\bigl(\supp(x)\bigr) N'(x),$$
whence $(i,x) \mapsto N_i(x)$ is separately dominated. It follows from Lemma \ref{sepdomlemma} that the family of functions $(N_i)_{i \in I}$
is dominated, and we conclude by Lemma \ref{dominFunctiontoNorm} that some norm dominates all the $N_i$'s.
\end{proof}

\end{document}